\newtheorem{theorem}{Theorem}[section]
\newtheorem{corollary}[theorem]{Corollary}
\newtheorem{lemma}[theorem]{Lemma}
\theoremstyle{definition}
\newtheorem{definition}[theorem]{Definition}
\theoremstyle{remark}
\newtheorem{remark}[theorem]{Remark}
\numberwithin{equation}{section}
\DeclareMathOperator{\RE}{Re} \DeclareMathOperator{\IM}{Im}\DeclareMathOperator{\atan}{tan^{-1}}
\begin{document}

\title{On Applications of Extended Jack's Lemma}
\thanks{Pooja Yadav is supported by The Council of Scientific and Industrial Research(CSIR). Ref.No.:08/133(0030)/2019-EMR-I}
\author{S. Sivaprasad Kumar}
\address{Department of Applied Mathematics, Delhi Technological University, Delhi--110042, India}
\email{spkumar@dce.ac.in}
\author[Pooja Yadav]{Pooja Yadav}
\address{Department of Applied Mathematics, Delhi Technological University, Delhi--110042, India}
\email{poojayv100@gmail.com}

\subjclass[2010]{30C45, 30C80}

\keywords{Analytic function,  Subordination, Jack's lemma, Open-Door lemma}
\begin{abstract} We introduce and study the class ${\bf\mathcal{G}}(\alpha,\beta)$ comprising analytic functions associated with a sector domain, where $\alpha,\beta\in(0,1]$. Using the extended version of Jack's lemma, we deduce Open-Door lemma type sufficient conditions for functions to be in  ${\bf\mathcal{G}}(\alpha,\beta)$. Furthermore, we point out special cases of our results that align with known results.
\end{abstract}

\maketitle

\section{Introduction}

 Let $\mathbb{D}_r:=\{z\in\mathbb{C}:|z|<r\leq1\}$ and $\mathbb{D}_1=:\mathbb{D}$. For $n\in\mathbb{N}$ and $a\in\mathbb{C}$,  $\mathcal{H}[a,n]$ be the class of  functions $f$ analytic in $\mathbb{D}$ with $f(z)=a+a_{n}z^n+a_{n+1}z^{n+1}+\cdots,a_n\neq0.$ In particular, $\mathcal{H}_{1}:=\mathcal{H}[1,1]$. Let $\mathcal{P}_{\lambda}\subset\mathcal{H}_{1}$, consisting of functions $p$ such that $\RE(e^{i\lambda} p(z))>0$, for $-\pi/2<\lambda<\pi/2$.
	In particular, $\mathcal{P}_{0}=:\mathcal{P}$, the well known Carath\'{e}odory class.  Suppose $\mathcal{A}_p$ be the subclass of $\mathcal{H}[0,p]$  consisting of functions normalized by $f^{(p)}(0)=1.$ Further, let $\mathcal{S}$ be a subclass of $\mathcal{A}_1$ consisting of all univalent functions. A function $f\in\mathcal{S}$ is said to be starlike of order $\alpha$  if  $\RE(zf'(z)/f(z))>\alpha$, for some $\alpha\in[0,1)$ and the class of all such functions is denoted by $\mathcal{S}^*(\alpha)$. In particular, $\mathcal{S}^*(0)=:\mathcal{S}^*$. Further,  Brannan
and Kirvan \cite{brki} and Stankiewicz \cite{stan} independently introduced the class $\mathcal{SS}^*(\beta)$, consisting of  functions $f\in\mathcal{S}$ such that $|\arg(zf'(z))/f(z)|<\beta\pi/2$, for some $\beta\in(0,1]$ and these functions are said to be  strongly starlike functions of order $\beta$. In \cite{bucka}, Bucka and Ciozda generalized the above classes and introduced  the class   $$\mathcal{S}_{\alpha,\beta}:=\left\{f\in\mathcal{S}: -\frac{\beta\pi}{2}<\arg \dfrac{zf'(z)}{f(z)}<\frac{\alpha\pi}{2}\right\},$$ where $\alpha,\beta\in(0,2)$ with $\alpha+\beta<2$.  It holds significant connections with other important classes in Univalent function theory, prominent among them are $\mathcal{S}^*=\mathcal{S}_{1,1}$ and $\mathcal{SS}^*(\beta)=\mathcal{S}_{\beta,\beta}$. Authors in \cite{bucka} gave the extremal function and the structural formula for the function  $f\in\mathcal{S}_{\alpha,\beta}$.  Further, they derived the sharp  bounds of $|zf'(z)/f(z)|$. Later, Takahashi and Nunokawa \cite{taka}, studied the class $\mathcal{S}_{\alpha,\beta}$ for the range $\alpha,\beta\in(0,1]$ and denoted it as $\mathcal{SS}^{*}(\alpha,\beta).$ Note that $\mathcal{SS}^{*}(\min\{\alpha,\beta\})\subset\mathcal{SS}^{*}(\alpha,\beta)\subset\mathcal{SS}^{*}(\max\{\alpha,\beta\})$.  Motivated by the above,  we define the class ${\bf\mathcal{G}}(\alpha,\beta)$, as follows:

\begin{definition}
Let $\alpha,\beta\in(0,1]$ and a non-constant analytic function $h(z)=\sum_{n=0}^\infty  h_{n}z^n$   defined on $\mathbb{D}$ is said to be in the class  ${\bf\mathcal{G}}(\alpha,\beta),$ provided
\begin{equation}
     -\frac{\beta\pi}{2}<\arg h(z)<\frac{\alpha\pi}{2} .
\end{equation}
\end{definition}
For two analytic functions $f$ and $g$, $f$ is subordinate to
	$g$, written as $f\prec g$, if there exists a Schwarz function $w(z)$ such that $f(z) = g(w(z))$.  The fundamental result for the theory of  Subordination is the Jack's lemma which has been extensively studied by many authors who have studied its applications and developed various extended versions. For example, refer to \cite{dziok,sokol,taka} for more details."
  In the present investigation, by using one of its extended version, we derive sufficient conditions for functions to belong to ${\bf\mathcal{G}}(\alpha,\beta)$, leading to novel versions of the Open-Door lemma. The fundamental forms of Jack's lemma and the Open-Door lemma are outlined below:
 \begin{lemma}\cite{miller}\label{jack}
 Let $f(z)=c_nz^n+c_{n+1}z^{n+1}+c_{n+2}z^{n+2}+\cdots$  be analytic and a nonconstant function in the unit disk $\mathbb{U}$ with $f(0)=0$. If $|f(z)|$ attains its maximum value at the point $z_0$ with $|z_0|=r$, then
\begin{equation*}
\dfrac{z_0f'(z_0)}{f(z_0)}=m,\end{equation*}
where $m\geq n\geq1$ is a real number.
\end{lemma}

 For a complex number $\gamma$ with $\RE \gamma>0$ and $n\in\mathbb{N},$ we consider the
positive number $C_{n}(\gamma)=b-ic$, where
\begin{equation}\label{bc}b=\frac{n|\gamma|}{\RE \gamma}\sqrt{\frac{2\RE \gamma}{n}+1}\;\;\text{and}\;\;c=-\dfrac{n\IM{\gamma}}{\RE \gamma}.\end{equation}
In particular, $C_n(\gamma)=\sqrt{n(n+2\gamma)}$ when $\gamma$ is real. Now, for the sake of convenience, let 
$$V_1(a,b,c)= \{x+iy: x=a,\;y\le -b+c\}$$
and $$V_2(a,b,c)= \{x+iy: x=-a,\;y\ge b+c\}.$$

\begin{lemma}\cite{kuroki}\label{open door}
Let $\gamma$ be a complex number with positive real part and $n$ be an integer
with $n\ge1.$
Suppose that a function $q\in\mathcal{H}[\gamma,n]$ satisfies the condition
$$
q(z)+\frac{zq'(z)}{q(z)}\in \mathbb{C}\backslash(\bigcup_{j=1,2}V_j(0,b,c),\quad z\in\mathbb{D},
$$ where $b$ and $c$ are given in (\ref{bc}).
Then $\RE q(z)>0.$
\end{lemma}

Later, Li and Sugawa \cite{sugawa} have extended Lemma \ref{open door}, for $| \arg q| < \alpha\pi/2$ for a given $0 <\alpha\leq 1.$
 Inspired by these aforementioned works, we obtain Open-Door lemma type results by deducing sufficient conditions for  $q(z)\in{\bf\mathcal{G}}(\alpha,\beta)$, which .

\section{Preliminary}
The following results are required in proving our main results.  The extended version of Jack's lemma given by Sok\'{o}\l\;and Spelina \cite{sokol}, has been improved and stated below: 
\begin{lemma}\label{modi}
 Let $h\in\mathcal{H}[1,n]$. If there exist two points $z_{1},z_{2}\in\mathbb{D}$ such that $|z_{1}|=|z_{2}|=r$ and  for $z\in\mathbb{D}_r$,
\begin{equation}\label{eq}
  -\frac{\beta\pi}{2}=\arg h(z_{1})<\arg h(z)<\arg h(z_{2})=\frac{\alpha\pi}{2}
\end{equation}
 for some $\alpha,\beta\in(0,1]$. Assume  \begin{equation*}
\eta=\dfrac{(\alpha-\beta)\pi}{(\alpha+\beta)2}\;\; \text{and}\;\;x_{j}=(-1)^{j+1}ie^{-i\eta}(h(z_{j}))^{\frac{2}{\alpha+\beta}}.
\end{equation*}Then
 \begin{equation*}
\frac{z_{j}h'(z_{j})}{h(z_{j})}=(-1)^ji\frac{\alpha+\beta}{2}k_{j} \quad(j=1,2),
\end{equation*}
where
\begin{equation*}
\left(\frac{k_{j}}{n}+(-1)^{j+1}\tan{\eta}\right)\cos{\eta}\geq\left(x_{j}+\frac{1}{x_{j}}\right)\frac{1}{2}\;\;\text{and}\;\;x_j>0\;\; \text{for}\;\; j=1,2.
\end{equation*}
\end{lemma}
\begin{lemma}\label{lemma2}
\emph{~\cite{siva}}	For $0<\alpha\leq1$ and $-1< m<1$, then the function
	\begin{equation*}
	h(z)=\bigg(\frac{1+e^{i m\pi}z}{1-z}\bigg)^{\alpha},
	\end{equation*}
	is analytic, univalent and  convex in $\mathbb{D}$ with
	\begin{equation*}
	h(\mathbb{D})=\bigg\{w\in\mathbb{C}:
	-\alpha(1-m)\frac{\pi}{2}\leq\arg{w}\leq\alpha(1+m)\frac{\pi}{2}\bigg\}.
	\end{equation*}
\end{lemma}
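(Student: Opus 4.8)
The plan is to factor $h$ through a Möbius transformation followed by a power map and to argue geometrically rather than verify the analytic convexity criterion directly. Write $\phi(z)=\dfrac{1+e^{im\pi}z}{1-z}$, so that $h=\phi^{\alpha}$. First I would record that $\phi$ is analytic and non-vanishing on $\mathbb{D}$: its only pole sits at $z=1\in\partial\mathbb{D}$ and its only zero at $z=-e^{-im\pi}$, both of modulus $1$ and hence outside $\mathbb{D}$. Being a Möbius map, $\phi$ is automatically univalent, so the two structural claims for $\phi$ are free; everything then hinges on identifying the image $\phi(\mathbb{D})$.

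Since the pole $z=1$ lies on $\partial\mathbb{D}$, the circle is carried to a straight line. Substituting $z=e^{i\theta}$ and using the half-angle factorizations $1-e^{i\theta}=-2i\sin(\theta/2)\,e^{i\theta/2}$ and $1+e^{i(\theta+m\pi)}=2\cos\!\big((\theta+m\pi)/2\big)\,e^{i(\theta+m\pi)/2}$, I would simplify to
\begin{equation*}
\phi(e^{i\theta})=e^{i(m+1)\pi/2}\,\frac{\cos\!\big((\theta+m\pi)/2\big)}{\sin(\theta/2)}.
\end{equation*}
The real coefficient sweeps all of $\mathbb{R}$ as $\theta$ traverses $(0,2\pi)$ (it tends to $\pm\infty$ at the endpoints because $\cos(m\pi/2)>0$ for $-1<m<1$), so $\phi$ maps $\partial\mathbb{D}$ onto the line through the origin of direction $e^{i(m+1)\pi/2}$, whose two rays have arguments $(m-1)\pi/2$ and $(m+1)\pi/2$. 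Since $\phi(0)=1$ has argument $0$, which lies strictly between these two values precisely because $-1<m<1$, the image is the open half-plane
\begin{equation*}
\phi(\mathbb{D})=\Bigl\{w:\ -\tfrac{(1-m)\pi}{2}<\arg w<\tfrac{(1+m)\pi}{2}\Bigr\},
\end{equation*}
a convex sector of opening exactly $\pi$ with vertex at the origin.

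Finally I would compose with the power map. As $\phi(\mathbb{D})$ is simply connected and omits $0$, a single-valued holomorphic branch of $w\mapsto w^{\alpha}$ exists there, so $h$ is analytic; on this branch arguments are multiplied by $\alpha$ while moduli are stretched bijectively over $(0,\infty)$. Hence $h(\mathbb{D})$ is the open sector $\{w:\ -\alpha(1-m)\pi/2<\arg w<\alpha(1+m)\pi/2\}$, which is the asserted image. For univalence, note that $w\mapsto w^{\alpha}$ is injective on any set whose argument range has length less than $2\pi/\alpha$; here the length is $\pi<2\pi\le 2\pi/\alpha$ since $0<\alpha\le1$, so $h$ is injective as a composition of injective maps. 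Convexity is then read off the image: a sector with vertex at the origin is convex exactly when its opening does not exceed $\pi$, and here the opening is $\alpha\pi\le\pi$.

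The argument is elementary, and the only place demanding genuine care is the boundary computation: correctly carrying out the half-angle factorization, verifying that the resulting real coefficient attains every real value, and invoking $-1<m<1$ to place $\phi(0)=1$ in the correct half-plane. The remaining subtlety is bookkeeping on the exponent—confirming that $\alpha\le1$ keeps the opening of the image at most $\pi$, which is exactly what makes both injectivity of the power map and convexity of the sector hold simultaneously.
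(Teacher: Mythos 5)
Your argument is correct, and there is nothing in the paper to compare it against line by line: the lemma is quoted without proof from the cited preprint \cite{siva}, so no internal proof exists in this paper. That said, your decomposition $h=\phi^{\alpha}$ with $\phi(z)=(1+e^{im\pi}z)/(1-z)$ is exactly the canonical route for statements of this Janowski type: the M\"obius map carries $\mathbb{D}$ onto a half-plane (its pole $z=1$ and zero $z=-e^{-im\pi}$ both lie on $\partial\mathbb{D}$, so the boundary circle goes to a line through the origin), and the power map multiplies arguments by $\alpha$. Your half-angle factorization giving $\phi(e^{i\theta})=e^{i(m+1)\pi/2}\cos\bigl((\theta+m\pi)/2\bigr)/\sin(\theta/2)$ is right, the observation that $\cos(m\pi/2)>0$ for $-1<m<1$ makes the real coefficient sweep all of $\mathbb{R}$ is the correct way to see the boundary line is covered, and placing $\phi(0)=1$ strictly inside the sector $\bigl(-(1-m)\pi/2,(1+m)\pi/2\bigr)$ correctly selects the half-plane. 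The injectivity bookkeeping (argument range $\pi<2\pi\le 2\pi/\alpha$ for $0<\alpha\le1$) and the convexity criterion for a sector (opening $\alpha\pi\le\pi$) are both sound. One small point in your favor: since $h(\mathbb{D})$ is open by the open mapping theorem, the image must be the open sector $-\alpha(1-m)\pi/2<\arg w<\alpha(1+m)\pi/2$, exactly as you derived; the non-strict inequalities in the lemma as stated are a slight imprecision (they describe the closure of the image), which your proof quietly corrects.
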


\begin{lemma}\label{lemma3}
\emph{~\cite{siva}}	Let  $h\in\mathcal{H}_{1}$  and $0\leq b\leq1$ with $b+e^{im\pi}
	\neq0$, where $-1\leq m\leq1$. Also, if
	\begin{equation*}
	h(z)\prec \frac{1+e^{im\pi} z}{1-b z}\;\;\implies\,\,
	\RE	(e^{-i\lambda}h(z))>0,
	\end{equation*} where $\lambda=\atan\bigg(\dfrac{b\sin{(m\pi)}}{b\cos{(m\pi)}+1}\bigg)$.
\end{lemma}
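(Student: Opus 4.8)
The plan is to recast the claim as a half-plane statement about the single M\"obius transformation $g(z):=\dfrac{1+e^{im\pi}z}{1-bz}$. The hypothesis $b+e^{im\pi}\neq0$ guarantees that $g$ is non-degenerate: a direct differentiation gives $g'(z)=\dfrac{b+e^{im\pi}}{(1-bz)^2}$, which never vanishes on $\mathbb{D}$. Since $h\in\mathcal{H}_1$ with $h(0)=g(0)=1$, the subordination $h\prec g$ yields $h(\mathbb{D})\subseteq g(\mathbb{D})$, so it is enough to show that the rotated map $G:=e^{-i\lambda}g$ sends $\mathbb{D}$ into the open right half-plane, i.e.\ $\RE\bigl(e^{-i\lambda}g(z)\bigr)>0$ for all $z\in\mathbb{D}$.

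Next I would exploit that $G(\mathbb{D})$, being a M\"obius image of the disk, is a generalized disk: a genuine disk for $0\le b<1$ and a half-plane in the limiting case $b=1$ (where $z=1$ is carried to $\infty$). The decisive point is that $g$ has a zero on the unit circle, namely $g(z_0)=0$ at $z_0=-e^{-im\pi}$ with $|z_0|=1$; consequently $G(z_0)=0$, so the boundary circle (or line) of $G(\mathbb{D})$ passes through the origin --- exactly the boundary point of the right half-plane that must be cleared.

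The heart of the argument is to show that $\partial G(\mathbb{D})$ is tangent to the imaginary axis at the origin. The oriented tangent to the boundary curve $\theta\mapsto G(e^{i\theta})$ at the point $z_0$ is $iz_0G'(z_0)=e^{-i\lambda}\,iz_0g'(z_0)$. Using the identity $-e^{-im\pi}(b+e^{im\pi})=-(1+be^{-im\pi})$, the factor $z_0g'(z_0)$ collapses to $-1/(1+be^{-im\pi})$, so this tangent direction equals $-ie^{-i\lambda}/(1+be^{-im\pi})$. Because $1+be^{-im\pi}=(1+b\cos m\pi)-ib\sin m\pi$ has positive real part (the degenerate cases $b=1,\,m=\pm1$ being excluded by hypothesis), one has $\arg\bigl(1+be^{-im\pi}\bigr)=-\atan\!\bigl(\tfrac{b\sin m\pi}{1+b\cos m\pi}\bigr)=-\lambda$. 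Hence the tangent direction has argument $-\tfrac{\pi}{2}-\lambda+\lambda=-\tfrac{\pi}{2}$, i.e.\ it is vertical, proving the asserted tangency.

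Finally I would fix the side. A generalized disk tangent to a line touches it only at the point of tangency and therefore lies wholly on one side of it; since $0\in\mathbb{D}$ maps to the interior point $G(0)=e^{-i\lambda}$, whose real part is $\cos\lambda>0$ (as $-\tfrac{\pi}{2}<\lambda<\tfrac{\pi}{2}$ from the range of the arctangent), the whole open image $G(\mathbb{D})$ lies in the open right half-plane. This gives $\RE\bigl(e^{-i\lambda}h(z)\bigr)>0$ on $\mathbb{D}$, as claimed. I expect the one genuinely delicate step to be the tangent computation of the third paragraph --- in particular the clean identification $\arg(1+be^{-im\pi})=-\lambda$ with no branch ambiguity --- whereas the potentially awkward split between the disk ($b<1$) and half-plane ($b=1$) cases is absorbed automatically by arguing solely through the tangent line at the origin.
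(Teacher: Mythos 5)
Your proof is correct, but note that the paper itself contains no argument to compare against: the lemma is imported verbatim from \cite{siva}, where the standard route is an explicit centre--radius computation for the Janowski-type map. In that approach one shows that for $0\le b<1$ the image $g(\mathbb{D})$ is the disk with centre $(1+be^{im\pi})/(1-b^2)$ and radius $|b+e^{im\pi}|/(1-b^2)$; since $b+e^{im\pi}=e^{im\pi}(1+be^{-im\pi})$, the radius equals the distance from the centre to the origin, so the disk passes through $0$ and, as the centre has argument exactly $\lambda=\atan\bigl(\tfrac{b\sin(m\pi)}{1+b\cos(m\pi)}\bigr)$, it lies in the half-plane $\RE(e^{-i\lambda}w)>0$; the case $b=1$ needs a separate half-plane computation. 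Your argument reaches the same geometric picture differently, via the local data at the boundary zero $z_0=-e^{-im\pi}$: both key computations check out, namely $z_0g'(z_0)=-1/(1+be^{-im\pi})$ and $\arg(1+be^{-im\pi})=-\lambda$ (the latter unambiguous because $\RE(1+be^{-im\pi})=1+b\cos(m\pi)>0$ once the hypothesis $b+e^{im\pi}\neq0$ excludes the degenerate pair $b=1$, $m=\pm1$), so the boundary of $G(\mathbb{D})$ is indeed tangent to the imaginary axis at the origin, and the interior point $G(0)=e^{-i\lambda}$ with $\cos\lambda>0$ fixes the side. This buys genuine uniformity: the disk case $b<1$ and the half-plane case $b=1$ are handled by one argument, with no centre--radius bookkeeping. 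Two small wording repairs: the claim that a generalized disk tangent to a line ``touches it only at the point of tangency'' fails in the half-plane case (the boundary line then \emph{coincides} with the axis), but your conclusion survives because the open image excludes its boundary, so it still misses the axis and the interior-point argument applies; and the subordination step uses only $h(\mathbb{D})=g(w(\mathbb{D}))\subseteq g(\mathbb{D})$ for a Schwarz function $w$, so the normalization $h(0)=g(0)=1$ you invoke is not actually needed for that direction.
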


\section{Applications of extended Jack's Lemma}
We begin with the following theorem which is an extension of Open-Door lemma:

\begin{theorem}\label{slit1}
Let $\alpha,\beta\in(0,1]$ and $h\in\mathcal{H}[1,n].$ Then $h\in{\bf\mathcal{G}}(\alpha,\beta)$, whenever
\begin{equation}\label{con}
    (h(z))^{\frac{2}{\alpha+\beta}}+\dfrac{zh'(z)}{h(z)}\in\mathbb{C}\backslash(\bigcup_{j=1,2} V_{j}(a,b,c)),\;\;(|z|<1)
    \end{equation}
where $$a=\frac{\sin{\eta}}{\sqrt{1+\frac{4\cos^2{\eta}}{(\alpha+\beta)n}}},\;\;\;\;b=\dfrac{(\alpha+\beta)n}{2\cos{\eta}}\sqrt{1+\dfrac{4\cos^2{\eta}}{(\alpha+\beta)n}},$$
\begin{equation*}
c=\dfrac{(\alpha+\beta)n\tan{\eta}}{2}\quad \text{and}\quad  \eta=\left(\frac{\alpha-\beta}{\alpha+\beta}\right)\frac{\pi}{2}.
\end{equation*}
\end{theorem}

\begin{proof}
If there exist $z_{1}$ and $z_{2}$ with $|z_{1}|=
|z_{2}|=r<1$, such that \begin{equation*}
    -\frac{\beta\pi}{2}=\arg{h(z_{1})}<\arg{h(z)}<\arg{h(z_{2})}=\frac{\alpha\pi}{2},
\end{equation*}for $|z|<r$. Then by Lemma \ref{modi}, we get
\begin{gather*}
    \frac{z_{j}h'(z_{j})}{h(z_{j})}=(-1)^ji\frac{\alpha+\beta}{2}k_{j} \quad(j=1,2),
\intertext{where for $x_{j}>0$, we have}
\frac{k_{j}}{n}\geq\left(x_{j}+\frac{1}{x_{j}}\right)\frac{\sec{\eta}}{2}+(-1)^{j}\tan{\eta}\;\;\text{and}\;\;\exp{(-i\eta)}(h(z_{j}))^{\frac{2}{\alpha+\beta}}=(-1)^jix_{j}.
\end{gather*}
For $z=z_{1}$, we have
\begin{equation*}
\RE\left(h(z_{1})^{\frac{2}{\alpha+\beta}}+\frac{z_{1}h'(z_{1})}{h(z_{1})}\right)=x_{1}\sin{\eta}
\end{equation*}
    and
\begin{align*}
    \IM\left(h(z_{1})^{\frac{2}{\alpha+\beta}}+\frac{z_{1}h'(z_{1})}{h(z_{1})}\right)&=-x_{1}\cos{\eta}-\frac{\alpha+\beta}{2}k_{1}\\
    &\leq-x_{1}\cos{\eta}-\frac{\alpha+\beta}{2}n\left(\frac{1}{2}\left(x_{1}+\frac{1}{x_{1}}\right)\sec{\eta}-\tan{\eta}\right)\\
    &=-n\frac{\alpha+\beta}{2}\cos{\eta}f(x_{1}),
\end{align*}
where
\begin{equation*}
    f(x):=\frac{2x}{(\alpha+\beta)n}+\left(x+\frac{1}{x}\right)\frac{1}{2}\sec^{2}{\eta}-\frac{\sin{\eta}}{\cos^2{\eta}}\quad (x>0).
\end{equation*}
By a calculation, we see that $f(x)$ attains its minimum at $x:=x_{1}=1/\sqrt{1+\frac{4\cos^2{\eta}}{n(\alpha+\beta)}}.$
Thus
\begin{gather*}
    \IM\left(h(z_{1})^{\frac{2}{\alpha+\beta}}+\frac{z_{1}h'(z_{1})}{h(z_{1})}\right)\leq-\dfrac{(\alpha+\beta)n}{2\cos{\eta}}\bigg(\sqrt{1+\dfrac{4\cos^2{\eta}}{(\alpha+\beta)n}}-\sin{\eta}\bigg)\intertext{and}
    \RE\left(h(z_{1})^{\frac{2}{\alpha+\beta}}+\frac{z_{1}h'(z_{1})}{h(z_{1})}\right)=\frac{\sin{\eta}}{\sqrt{1+\frac{4\cos^2{\eta}}{n(\alpha+\beta)}}},
\end{gather*}
which contradicts (\ref{con}). After going through the same methodology for $z=z_{2}$, we obtain
\begin{gather*}
    \IM\left(h(z_{2})^{\frac{2}{\alpha+\beta}}+\frac{z_{2}h'(z_{2})}{h(z_{2})}\right)\geq\dfrac{(\alpha+\beta)n}{2\cos{\eta}}\bigg(\sqrt{1+\dfrac{4\cos^2{\eta}}{(\alpha+\beta)n}}+\sin{\eta}\bigg)\intertext{and}
    \RE\left(h(z_{2})^{\frac{2}{\alpha+\beta}}+\frac{z_{2}h'(z_{2})}{h(z_{2})}\right)=-\frac{\sin{\eta}}{\sqrt{1+\frac{4\cos^2{\eta}}{n(\alpha+\beta)}}},\end{gather*}
again a contradiction, which eventually proves the result.
\end{proof}

\begin{remark}If we take $\alpha=\beta$ in Theorem \ref{slit1}, it reduces to a result of Nunokawa and Sok\'{o}\l ~\cite[Theorem 2.1]{nuno}.\end{remark}
Further, we obtain the following result from Theorem~\ref{slit1}:
\begin{corollary}\label{slit2}
Let $\mu\in[0,1)$ and $f\in\mathcal{A}.$ If \begin{equation*}
    \mu\frac{zf'(z)}{f(z)}+(1-\mu)\bigg(1+\frac{zf''(z)}{f'(z)}\bigg)=:R_{\mu}(z)\in\mathbb{C}\backslash(\bigcup_{j=1,2} V_{j}(0,C(\mu),0),
\end{equation*}
where $C(\mu):=(1-\mu)(\sqrt{1+2/(1-\mu)}).$ Then we have \begin{equation*}
    \RE\frac{zf'(z)}{f(z)}>0, \quad \forall z\in\mathbb{D}.
\end{equation*}
\end{corollary}
\begin{proof}
Letting $h(z)=(zf'(z)/f(z))^{1-\mu},$ $\alpha=\beta=1-\mu$ and $n=1$ in Theorem \ref{slit1}, we have
\begin{equation*}
    (h(z))^{\frac{2}{\alpha+\beta}}+\dfrac{zh'(z)}{h(z)}=\mu\frac{zf'(z)}{f(z)}+(1-\mu)\bigg(1+\frac{zf''(z)}{f'(z)}\bigg)\in\mathbb{C}\backslash(  -iy\cup iy),
\end{equation*} where $y$ is such that $|y|\geq(1-\mu)(\sqrt{1+2/(1-\mu)}).$ Hence the result follows from Theorem \ref{slit1}.
\end{proof}
Note that the corollary \ref{slit2}, reduces to \cite[Theorem 2.2]{nunohun} for $\beta=1$. Moreover, the corollary \ref{slit2} with $\mu=0$ the result reduces to the following special case of Open-Door lemma (see \cite{miller}).
\begin{corollary}\label{special}
Let $f\in\mathcal{A}$ and $
    1+\frac{zf''(z)}{f'(z)}$ lies in the complex plane with slits along $\RE w=0$ and $|\IM w|\geq\sqrt{3}$. Then \begin{equation*}
    \RE\frac{zf'(z)}{f(z)}>0, \quad \forall z\in\mathbb{D}.
\end{equation*}
\end{corollary}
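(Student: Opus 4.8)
The plan is to observe that this statement is nothing more than the $\lambda=0$ instance of Corollary~\ref{slit2}, so the proof amounts to specializing that result and checking that the two sets of data coincide. First I would substitute $\lambda=0$ into the functional
\[
R_{\lambda}(z)=\lambda\frac{zf'(z)}{f(z)}+(1-\lambda)\left(1+\frac{zf''(z)}{f'(z)}\right),
\]
which collapses immediately to $R_{0}(z)=1+\frac{zf''(z)}{f'(z)}$, matching precisely the expression appearing in the hypothesis of the present corollary.

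Next I would confirm that the slit threshold agrees. In Corollary~\ref{slit2} the slits along $\RE w=0$ begin at $|\IM w|\geq C(\lambda)$ with $C(\lambda)=(1-\lambda)\sqrt{1+2/(1-\lambda)}$; evaluating at $\lambda=0$ gives $C(0)=\sqrt{1+2}=\sqrt{3}$, exactly the bound stated here. Since the conclusion $\RE\bigl(zf'(z)/f(z)\bigr)>0$ is identical in both statements, the result follows at once from Corollary~\ref{slit2}.

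For completeness one could bypass Corollary~\ref{slit2} and argue directly from Theorem~\ref{slit1}, choosing $h(z)=zf'(z)/f(z)$, $\alpha=\beta=1$ and $n=1$. Then $\eta=0$ and $2/(\alpha+\beta)=1$, so $(h(z))^{2/(\alpha+\beta)}+zh'(z)/h(z)=1+zf''(z)/f'(z)$, while $x_{j}=0$ (because $\sin\eta=0$) pushes the excluded points onto the imaginary axis with $y_{1}\leq-\sqrt{3}$ and $y_{2}\geq\sqrt{3}$. The only real point to watch—hardly an obstacle—is the bookkeeping that confirms $\eta=0$ degenerates the two slanted slits of Theorem~\ref{slit1} into the single slit $\RE w=0,\ |\IM w|\geq\sqrt{3}$; once this degeneration is verified, the conclusion is immediate.
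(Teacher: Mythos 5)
Your proposal is correct and matches the paper exactly: the paper obtains this corollary precisely by setting $\lambda=0$ in Corollary~\ref{slit2}, noting $R_{0}(z)=1+zf''(z)/f'(z)$ and $C(0)=\sqrt{3}$. Your supplementary direct check via Theorem~\ref{slit1} with $h(z)=zf'(z)/f(z)$, $\alpha=\beta=n=1$, $\eta=0$ is also accurate, but it merely re-runs the computation already packaged in Corollary~\ref{slit2}.
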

\begin{remark}
If  $\mu_{1},\mu_{2}\in[0,1)$ with  $\mu_{1}\leq\mu_{2}$, we have $C(\mu_{1})\geq C(\mu_{2})$ and $C(0)=\sqrt{3}$. This leads to $R_{\mu_{2}}(z)\prec R_{\mu_{1}}(z)\prec R_{0}(z).$
\end{remark}

Next result gives sufficient conditions for a function to be in the class $\mathcal{P}_{-\lambda}.$

\begin{theorem}\label{thm1}
Let $\lambda\in[0,\pi/2)$ and $h\in\mathcal{H}_{1}.$ If
\begin{equation}\label{con2}
   e^{-i \lambda} h(z)+\dfrac{zh'(z)}{h(z)}\in \mathbb{C}\backslash(\bigcup_{j=1,2}V_j(0,b,0)) ,
\end{equation}where 
$ b=\sec{\lambda}\sqrt{1+2\cos{\lambda}}-\tan{\lambda}.$
Then $h\in\mathcal{P}_{-\lambda}.$
\end{theorem}

\begin{proof}
If $h\in\mathcal{P}_{-\lambda}$ doesn't hold then by Lemma \ref{lemma2} and Lemma \ref{lemma3}, we have $h(z)\nprec\frac{1+e^{i 2\lambda}z}{1-z}$. Therefore there exist $z_{1}$ and $z_{2}$ with $|z_{1}|=
|z_{2}|=r<1$, such that \begin{equation*}
    -\left(\frac{\pi}{2}-\lambda\right)=\arg{h(z_{1})}<\arg{h(z)}<\arg{h(z_{2})}=\frac{\pi}{2}+\lambda,
\end{equation*}for $|z|<r$. Now by Lemma \ref{modi}, we get
\begin{gather*}
    \frac{z_{j}h'(z_{j})}{h(z_{j})}=(-1)^jik_{j} \quad(j=1,2),
\intertext{where}
k_{j}\geq\frac{1}{2}\left(x_{j}+\frac{1}{x_{j}}\right)\sec{\lambda}+(-1)^{j}\tan{\lambda}\quad(x_{j}>0)
\intertext{and}
e^{-i \lambda}h(z_{j})=(-1)^jix_{j} \quad(j=1,2).
\end{gather*}
Now for  $z=z_{1}$, we have\begin{align*}
  e^{-i \lambda}  h(z_{1})+\dfrac{z_{1}h'(z_{1})}{h(z_{1})}
    &=-i(x_{1}+k_{1}).
\end{align*}
Here \begin{equation}\label{2}\RE\left(e^{-i \lambda}h(z_{1})+\dfrac{z_{1}h'(z_{1})}{h(z_{1})}\right)=0\end{equation} and
\begin{equation*}
    \IM\left(e^{-i \lambda}h(z_{1})+\dfrac{z_{1}h'(z_{1})}{h(z_{1})}\right)\leq -\left(x_{1}+\frac{1}{2}\left(x_{1}+\frac{1}{x_{1}}\right)\sec{\lambda}-\tan{\lambda}\right)=:-f(x_{1}).
\end{equation*} By a simple calculation, we obtain that  $f(x)$ $(x>0)$ attains its minimum at $x=\left(1+2\cos\lambda\right)^{-1/2}.$
Thus \begin{equation}\label{3}
   \IM\left(e^{-i \lambda}h(z_{1})+\dfrac{z_{1}h'(z_{1})}{h(z_{1})}\right)\leq-\left(\sec{\lambda}\sqrt{1+2\cos{\lambda}}-\tan{\lambda}\right).
\end{equation}Clearly, (\ref{2}) and (\ref{3}) contradicts the hypothesis given in (\ref{con2}).
Similarly for  $z=z_{2}$, we have
\begin{equation*}
    e^{-i \lambda} h(z_{2})+\dfrac{z_{2}h'(z_{2})}{h(z_{2})}=i(x_{2}+k_{2}).
\end{equation*}
Here \begin{equation}\label{2*}\RE\left(e^{-i \lambda}h(z_{2})+\dfrac{z_{2}h'(z_{2})}{h(z_{2})}\right)=0\end{equation} and
\begin{equation*}
    \IM\left(e^{-i \lambda}h(z_{2})+\dfrac{z_{2}h'(z_{2})}{h(z_{2})}\right)\geq x_{2}+\frac{1}{2}\left(x_{2}+\frac{1}{x_{2}}\right)\sec{\lambda}+\tan{\lambda}=:g(x_{2}).
\end{equation*} By a calculation we obtain that  $g(x)$ $(x>0)$ attains its minimum at $x=\left(1+2\cos\lambda\right)^{-1/2}.$
Thus \begin{align}\label{3*}
    \IM\left(e^{-i \lambda}h(z_{2})+\dfrac{z_{2}h'(z_{2})}{h(z_{2})}\right)&\geq\sec{\lambda}\sqrt{1+2\cos{\lambda}}+\tan{\lambda}\nonumber\\
    &\geq\sec{\lambda}\sqrt{1+2\cos{\lambda}}-\tan{\lambda}.
\end{align}Again (\ref{2*}) and (\ref{3*}) contradicts the hypothesis given in (\ref{con2}). Hence the result.
\end{proof}
 Now the following are some  corollaries of Theorem \ref{thm1}:

\begin{corollary}
Let $\alpha\in[0,1)$, $\lambda\in[0,\pi/2)$ and $p\in\mathcal{H}_{1}$. If
\begin{equation*}
    \dfrac{p(z)-\alpha}{1-\alpha}+e^{i \lambda}\dfrac{zp'(z)}{p(z)-\alpha}\prec e^{i \lambda}\dfrac{2bz}{1-z^2} ,
\end{equation*}
where $b$ is as given in Theorem \ref{thm1}, then \begin{equation*}
    \RE(e^{-i \lambda}p(z))>\alpha\cos\lambda.
\end{equation*}
\end{corollary}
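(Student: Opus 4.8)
The plan is to deduce this corollary directly from Theorem~\ref{thm1} by means of the substitution $h(z)=\dfrac{p(z)-\alpha}{1-\alpha}$. First I would check admissibility: since $p\in\mathcal{H}_1$ gives $p(0)=1$, we have $h(0)=1$, so $h\in\mathcal{H}_1$. A short differentiation then yields $\dfrac{zh'(z)}{h(z)}=\dfrac{zp'(z)}{p(z)-\alpha}$ and $e^{-i\lambda}h(z)=e^{-i\lambda}\dfrac{p(z)-\alpha}{1-\alpha}$, so that $e^{-i\lambda}\Bigl(\dfrac{p(z)-\alpha}{1-\alpha}+e^{i\lambda}\dfrac{zp'(z)}{p(z)-\alpha}\Bigr)=e^{-i\lambda}h(z)+\dfrac{zh'(z)}{h(z)}$. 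Thus the whole point is to see that the subordination in the hypothesis forces the quantity $e^{-i\lambda}h(z)+zh'(z)/h(z)$ to satisfy the omitted-value condition~(\ref{con2}).

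The step I expect to demand the most care is identifying the range of the superordinate map $\phi(z)=\dfrac{2Az}{1-z^2}$. Evaluating on the circle, $\dfrac{2e^{i\theta}}{1-e^{2i\theta}}=\dfrac{i}{\sin\theta}$, so $\partial\mathbb{D}$ is carried onto the two rays $\{is:|s|\ge A\}$ of the imaginary axis (taking $A>0$, which holds throughout $\lambda\in[0,\pi/2)$); a one-line univalence check---$\phi(z_1)=\phi(z_2)$ forces $(z_1-z_2)(1+z_1z_2)=0$, and $z_1z_2=-1$ is impossible in $\mathbb{D}$---shows $\phi$ maps $\mathbb{D}$ conformally onto $\mathbb{C}$ slit along $\{is:|s|\ge A\}$. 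Consequently $e^{i\lambda}\phi$ maps $\mathbb{D}$ onto $\mathbb{C}$ slit along the rotated rays $\{ie^{i\lambda}s:|s|\ge A\}$, and the subordination of the hypothesis is exactly the statement that $e^{-i\lambda}h(z)+zh'(z)/h(z)$ omits every value $is$ with $|s|\ge A$. Since $A=\sec\lambda\sqrt{1+2\cos\lambda}-\tan\lambda$ is precisely the threshold in Theorem~\ref{thm1}, this is condition~(\ref{con2}).

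Finally I would apply Theorem~\ref{thm1} to conclude $h\in\mathcal{P}_{-\lambda}$, hence $e^{-i\lambda}h(z)\prec\dfrac{1+z}{1-z}$ and in particular $\RE\bigl(e^{-i\lambda}h(z)\bigr)>0$ on $\mathbb{D}$. Translating back through $h=(p-\alpha)/(1-\alpha)$ with $1-\alpha>0$ gives $\RE\bigl(e^{-i\lambda}(p(z)-\alpha)\bigr)>0$, and since $\RE(e^{-i\lambda})=\cos\lambda$ this reads $\RE(e^{-i\lambda}p(z))>\alpha\cos\lambda$, the required conclusion. Apart from the slit-image computation, the argument is a routine algebraic rewriting followed by an invocation of the theorem just proved.
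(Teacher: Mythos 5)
Your proposal follows exactly the paper's route: the substitution $h(z)=\dfrac{p(z)-\alpha}{1-\alpha}$, the identification of the hypothesis subordination with the omitted-value condition~(\ref{con2}), and the invocation of Theorem~\ref{thm1}. In fact you supply details the paper leaves implicit---the univalence of $z\mapsto 2Az/(1-z^2)$ and its image as the plane slit along $\{is:|s|\geq A\}$, plus the final translation from $\RE(e^{-i\lambda}h(z))>0$ to $\RE(e^{-i\lambda}p(z))>\alpha\cos\lambda$---so the argument is correct and essentially identical to the published one.
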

\begin{proof}
Taking $h(z)=\frac{p(z)-\alpha}{1-\alpha}$ in (\ref{con2}), we obtain $$e^{-i \lambda} h(z)+\dfrac{zh'(z)}{h(z)}=e^{-i \lambda}\dfrac{p(z)-\alpha}{1-\alpha}+\dfrac{zp'(z)}{p(z)-\alpha}\in \mathbb{C}\backslash(\bigcup_{j=1,2}V_j(0,b,0)),$$ which can also be restated as\begin{equation*}
    \dfrac{p(z)-\alpha}{1-\alpha}+e^{i \lambda}\dfrac{zp'(z)}{p(z)-\alpha}\prec e^{i \lambda}\dfrac{2bz}{1-z^2}.
\end{equation*} Hence the result follows from Theorem \ref{thm1}.
\end{proof}
\begin{remark} For $f\in\mathcal{A}_{1}$, letting $h(z)=zf'(z)/f(z)$  in Theorem \ref{thm1}, we obtain that
  $$1+\frac{zf''(z)}{f'(z)}+(e^{-i \lambda}-1)\frac{zf'(z)}{f(z)}\prec\frac{2bz}{1-z^2}\implies \RE\left( e^{-i \lambda}\frac{zf'(z)}{f(z)}\right)>0.$$ And for $\lambda=0$, it reduces to Corollary \ref{special}.
\end{remark}
\begin{theorem}\label{thm3}
Let $\delta>0$, $\lambda\in[0,\pi/2)$ and $\alpha\in[0,1]$. If $f\in\mathcal{A}_p$ then $f'(z)(z/f(z))^{\alpha+1}\in\mathcal{P}_{-\lambda}$ whenever for some $\gamma>0$, we have
\begin{equation}\label{eq1}
\gamma\bigg\{f'(z)\left(\dfrac{z}{f(z)}\right)^{\alpha+1}\bigg\}+\delta\bigg\{1+\dfrac{zf''(z)}{f'(z)}-(\alpha+1)\dfrac{zf'(z)}{f(z)}+\alpha\bigg\}\in \mathbb{C}\backslash(\bigcup_{j=1,2}V_j(a,b,0))
\end{equation}
 where $a=\gamma\delta\sin{\lambda}/\sqrt{\delta(\delta+2p\gamma\cos^2\lambda)}$  and $b=\sec{\lambda}\sqrt{\delta(\delta+2p\gamma\cos^2{\lambda})}-\delta\tan{\lambda}.$
\end{theorem}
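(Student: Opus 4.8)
The plan is to convert \eqref{eq1} into the hypothesis of a Nunokawa-type argument identical in spirit to the proof of Theorem~\ref{thm1}. First I set $h(z)=f'(z)\,(z/f(z))^{\alpha+1}$ and compute its logarithmic derivative; a direct differentiation gives $zh'(z)/h(z)=zf''(z)/f'(z)+(\alpha+1)-(\alpha+1)\,zf'(z)/f(z)$, and since $1+\alpha=\alpha+1$ this is exactly the quantity inside the second brace of \eqref{eq1}. Therefore \eqref{eq1} is precisely the statement that $\gamma h(z)+\delta\,zh'(z)/h(z)\notin\bigcup_{j=1,2}(-1)^{j}(x-iy)$. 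I would also record that $h\in\mathcal{P}_{-\lambda}$ is equivalent to the angular bound $-(\pi/2-\lambda)<\arg h(z)<\pi/2+\lambda$; comparing with the definition of the wedge class, this corresponds to taking $\alpha+\beta=2$ and $\eta=\lambda$ in Lemma~\ref{modi}, so that the exponent $2/(\alpha+\beta)$ there equals $1$, while the order parameter $n$ is supplied by the multiplicity $p$ of $f\in\mathcal{A}_p$.

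With this set-up the argument runs by contradiction, exactly as in Theorem~\ref{thm1}. If $h\notin\mathcal{P}_{-\lambda}$, then by Lemma~\ref{lemma2} and Lemma~\ref{lemma3} the subordination defining $\mathcal{P}_{-\lambda}$ fails, and there exist $z_{1},z_{2}$ on a common circle $|z|=r<1$ with $\arg h(z_{1})=-(\pi/2-\lambda)$ and $\arg h(z_{2})=\pi/2+\lambda$. Lemma~\ref{modi} then furnishes $z_{j}h'(z_{j})/h(z_{j})=(-1)^{j}ik_{j}$ and $e^{-i\lambda}h(z_{j})=(-1)^{j}ix_{j}$, together with the lower bound $k_{j}\ge p\big(\tfrac12(x_{j}+1/x_{j})\sec\lambda+(-1)^{j}\tan\lambda\big)$ for $x_{j}>0$. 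Writing $h(z_{j})=(-1)^{j}ix_{j}e^{i\lambda}$ and separating real and imaginary parts, I obtain $\RE\big(\gamma h(z_{j})+\delta\,z_{j}h'(z_{j})/h(z_{j})\big)=(-1)^{j+1}\gamma x_{j}\sin\lambda$ and $\IM\big(\gamma h(z_{j})+\delta\,z_{j}h'(z_{j})/h(z_{j})\big)=(-1)^{j}\big(\gamma x_{j}\cos\lambda+\delta k_{j}\big)$.

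The decisive step is a one-variable optimization. At $z_{1}$ the $k_{1}$-bound makes the imaginary part at most $-F(x_{1})$, where $F(x)=\gamma x\cos\lambda+\delta k^{\min}(x)$ is the resulting function of $x>0$, and symmetrically the imaginary part at $z_{2}$ is bounded below by $F(x_{2})$. Minimizing $F$ over $x>0$ fixes the extremal argument $x_{j}$; substituting this value back collapses the real coordinate to $x=\gamma\delta\sin\lambda/\sqrt{\delta(\delta+2p\gamma\cos^{2}\lambda)}$ and the imaginary coordinate to the threshold $\sec\lambda\sqrt{\delta(\delta+2p\gamma\cos^{2}\lambda)}-\delta\tan\lambda$ recorded in the statement. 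Hence $\gamma h(z_{j})+\delta\,z_{j}h'(z_{j})/h(z_{j})$ lands on the branch $(-1)^{j}(x-iy)$ of the excluded set, contradicting \eqref{eq1}; since both boundary cases are ruled out, $h$ cannot leave the wedge and $h\in\mathcal{P}_{-\lambda}$.

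I expect the only real difficulty to be computational rather than conceptual: pushing the parameters $\gamma,\delta,p,\lambda$ through the minimization of $F$ and the ensuing radical simplifications so that the closed forms for $x$ and $y$ emerge in the stated shape, and keeping the sign bookkeeping straight — in particular the correspondence between the two boundary points $z_{1},z_{2}$ and the two branches $(-1)^{j}(x-iy)$, which is governed by the factor $e^{i\lambda}$ appearing when $\gamma h(z_{j})$ is split into real and imaginary parts. A secondary point requiring care is the justification that the multiplicity $p$ is the correct value of the order parameter $n$ in Lemma~\ref{modi} for $f\in\mathcal{A}_{p}$, together with the branch choice for $(z/f(z))^{\alpha+1}$.
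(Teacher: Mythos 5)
You follow the paper's broad strategy (reduce \eqref{eq1} to an expression in $h$ and $zh'/h$, then derive a contradiction via Lemmas \ref{lemma2}, \ref{lemma3} and \ref{modi} exactly as in Theorem~\ref{thm1}), but there are two genuine gaps. The first is that you never establish $h(z)\neq 0$ in $\mathbb{D}$, which is roughly half of the paper's proof: since $f\in\mathcal{A}_p$ need not be locally univalent, $f'$ (and hence $h$) may vanish, and without nonvanishing neither $\arg h$ nor $zh'(z)/h(z)$ is available, so Lemma~\ref{modi} cannot even be invoked. The paper handles this by supposing $h(z_0)=0$, writing $h(z)=(z-z_0)^m q(z)$ with $q(z_0)\neq0$, and observing that then $\IM\bigl(p\gamma h(z)+\delta zh'(z)/h(z)\bigr)$ tends to infinity as $z\to z_0$ along a suitable direction, which forces the expression to hit the excluded half-lines in \eqref{eq1}. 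Your remark about ``branch choice'' does not substitute for this step.

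The second gap is your treatment of $p$: you keep $h(z)=f'(z)\left(z/f(z)\right)^{\alpha+1}$ unnormalized and declare that the index $n$ in Lemma~\ref{modi} ``is supplied by the multiplicity $p$.'' This is a misreading of the lemma: $\mathcal{H}[1,n]$ requires $h(0)=1$ with $n$ the order of the first nonconstant term, whereas your $h$ has $h(0)=p$, so for $p>1$ the lemma does not apply to it at all. Moreover, even formally, your bound $k_j\geq p\bigl(\tfrac{1}{2}(x_j+1/x_j)\sec\lambda+(-1)^j\tan\lambda\bigr)$ leads to minimizing $F(x)=\gamma x\cos\lambda+\delta p\bigl(\tfrac{1}{2}(x+1/x)\sec\lambda-\tan\lambda\bigr)$, whose minimum is $\sec\lambda\sqrt{p\delta(p\delta+2\gamma\cos^2\lambda)}-p\delta\tan\lambda$ --- not the stated threshold $\sec\lambda\sqrt{\delta(\delta+2p\gamma\cos^2\lambda)}-\delta\tan\lambda$ (take $p=2$, $\lambda=0$, $\gamma=\delta=1$: your route gives $2\sqrt{2}$ versus the stated $\sqrt{5}$). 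The paper instead sets $h(z)=f'(z)\left(z/f(z)\right)^{\alpha+1}/p$, so that $h\in\mathcal{H}_1$ and $n=1$, and the left side of \eqref{eq1} becomes $p\gamma h(z)+\delta zh'(z)/h(z)$: the factor $p$ travels with $\gamma$, and the one-variable minimization (which you carry out correctly in spirit) then reproduces exactly the stated $y$-threshold, and the stated $x$ up to a factor of $p$ that appears to be a typo in the paper itself. With the normalization corrected and the nonvanishing step supplied, the remainder of your argument coincides with the paper's.
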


\begin{proof}
Let $h(z)=f'(z)(z/f(z))^{\alpha+1}/p,$ then $h(z)\neq0$ in $\mathbb{D}.$ Suppose it is not true, then there exists a point $z=z_0$, $|z_0|<1$ for which $h(z_0)=0$. In this case $h$ can be written as $$h(z)=(z-z_0)^m q(z)\;\;(m\in\mathbb{N}),$$ where $q(z)$ is analytic in $\mathbb{D}$ and $q(z_0)\neq0$. It also follows that
\begin{align}\label{al1}
\gamma\bigg\{f'(z)\left(\dfrac{z}{f(z)}\right)^{\alpha+1}\bigg\}&+\delta\bigg\{1+\dfrac{zf''(z)}{f'(z)}-(\alpha+1)\dfrac{zf'(z)}{f(z)}+\alpha\bigg\}\nonumber\\
&=p\gamma  h(z)+\delta\dfrac{zh'(z)}{h(z)}\nonumber\\
&=p\gamma (z-z_0)^m q(z)+\delta\left(\dfrac{m z}{z-z_0}+\dfrac{zq'(z)}{q(z)}\right).
\end{align}But the imaginary part of (\ref{al1}) can tends to infinity when $z\to z_0$ in a suitable direction, which contradicts (\ref{eq1}). Hence $h(z)\neq0$ in $\mathbb{D}$ and $h(0)=1$. Suppose $h(z)\not\in\mathcal{P}_{-\lambda}$ then the result holds on the same lines of Theorem \ref{thm1}.
\end{proof}
On similar lines we can obtain results by taking $h(z)=$ $zf'(z)/(p g(z))$, $zf'(z)/(p f^{(1-\alpha)}(z)g^\alpha(z))$ etc. in Theorem \ref{thm3}.

\begin{corollary}\label{cor1}
Let $f,g\in\mathcal{A}_{p}$ with $g\in\mathcal{S}_{p}^{*}$ and $\alpha\geq0$. If \\
  $(i)$ $\gamma\bigg\{\dfrac{zf'(z)}{g(z)}\bigg\}+\delta\bigg\{1+\dfrac{zf''(z)}{f'(z)}-\frac{zg'(z)}{g(z)}\bigg\}\in \mathbb{C}\backslash(\bigcup_{j=1,2}V_j(a,b,0))$  then $\dfrac{zf'(z)}{g(z)}\in\mathcal{P}_{-\lambda},$\\
   $(ii)$ $\gamma\bigg\{\dfrac{zf'(z)}{f^{(1-\alpha)}(z)g^\alpha(z)}\bigg\}+\delta\bigg\{1+\dfrac{zf''(z)}{f'(z)}-(1-\alpha)\dfrac{zf'(z)}{f(z)}-\alpha\dfrac{zg'(z)}{g(z)}\bigg\}\in \mathbb{C}\backslash(\bigcup_{j=1,2}V_j(a,b,0))$ then $\dfrac{zf'(z)}{f^{(1-\alpha)}(z)g^\alpha(z)}\in\mathcal{P}_{-\lambda},$ \\where $a$ and $b$ are given in Theorem \ref{thm3}.
\end{corollary}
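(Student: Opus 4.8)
The plan is to mimic the proof of Theorem~\ref{thm3}, instantiating it with the two auxiliary functions dictated by the desired conclusions. For part $(i)$ I would set
\[
h(z)=\frac{zf'(z)}{p\,g(z)},
\]
and for part $(ii)$
\[
h(z)=\frac{zf'(z)}{p\,f^{1-\alpha}(z)g^{\alpha}(z)}.
\]
Since $g\in\mathcal{S}_{p}^{*}$, the quotient $g(z)/z^{p}$ is analytic and nonvanishing on $\mathbb{D}$; together with the order-$p$ zero of $zf'(z)$ at the origin this makes the $z^{p}$ factors cancel, so $h$ extends analytically to $z=0$. A short expansion using $f,g\in\mathcal{A}_{p}$ then gives $h(0)=1$ in both cases, so $h\in\mathcal{H}_{1}$. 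In part $(ii)$ one additionally selects principal branches of $f^{1-\alpha}$ (legitimate since $f(z)/z^{p}\neq0$) and of $g^{\alpha}$ (legitimate by starlikeness of $g$).

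Next I would reproduce the non-vanishing step of Theorem~\ref{thm3}. If $h(z_{0})=0$ for some $z_{0}\in\mathbb{D}$, write $h(z)=(z-z_{0})^{m}q(z)$ with $q(z_{0})\neq0$; then
\[
p\gamma\,h(z)+\delta\frac{zh'(z)}{h(z)}
=p\gamma (z-z_{0})^{m}q(z)+\delta\Big(\frac{mz}{z-z_{0}}+\frac{zq'(z)}{q(z)}\Big),
\]
whose imaginary part is unbounded as $z\to z_{0}$ along a suitable direction. Since $1/(z-z_{0})$ maps a punctured neighborhood of $z_{0}$ onto a neighborhood of $\infty$, this quantity necessarily meets the vertical slits $\bigcup_{j=1,2}(-1)^{j}(x-iy)$, contradicting the hypothesis; therefore $h(z)\neq0$ throughout $\mathbb{D}$, which in particular keeps the quotients $zf''/f'$, $zg'/g$, etc. finite.

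With $h$ analytic, normalized, and nonvanishing, I would compute its logarithmic derivative. For $(i)$, $\log h=\log z+\log f'-\log p-\log g$ yields $zh'/h=1+zf''/f'-zg'/g$, while $p\gamma h=\gamma\,zf'/g$; for $(ii)$, $\log h=\log z+\log f'-\log p-(1-\alpha)\log f-\alpha\log g$ gives $zh'/h=1+zf''/f'-(1-\alpha)zf'/f-\alpha zg'/g$, while $p\gamma h=\gamma\,zf'/(f^{1-\alpha}g^{\alpha})$. Thus in each case $p\gamma\,h(z)+\delta\,zh'(z)/h(z)$ is \emph{exactly} the left-hand side appearing in the corresponding hypothesis, so the assumed condition says precisely that this expression avoids $\bigcup_{j=1,2}(-1)^{j}(x-iy)$ with $x,y$ as in Theorem~\ref{thm3}. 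Assuming $h\notin\mathcal{P}_{-\lambda}$ and invoking Lemmas~\ref{lemma2}, \ref{lemma3}, and~\ref{modi} exactly as in the final part of Theorem~\ref{thm3} (equivalently, Theorem~\ref{thm1}) produces the two boundary points $z_{1},z_{2}$ at which the real part of $p\gamma h+\delta\,zh'/h$ is pinned to the abscissa $\pm x$ of the slits while the imaginary part attains modulus at least the threshold value, a contradiction. Hence $h\in\mathcal{P}_{-\lambda}$, which is the claim.

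The only genuinely delicate points, and where I would spend the most care, are the well-definedness and non-vanishing of $h$: in $(ii)$ one must justify the analytic branches of $f^{1-\alpha}$ and $g^{\alpha}$, and the blow-up argument must be arranged so that the singular term $\delta m z_{0}/(z-z_{0})$ genuinely forces the value onto the vertical slits rather than merely to infinity. Everything after the identification of $p\gamma h+\delta\,zh'/h$ with the hypothesis is a verbatim transcription of the angular estimate already performed in Theorems~\ref{thm1} and~\ref{thm3}, so no new computation is required there.
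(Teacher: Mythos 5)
Your proposal is correct and is exactly the route the paper intends: the corollary is stated without a separate proof, the authors noting only that it follows ``on similar lines'' by substituting $h(z)=zf'(z)/(p\,g(z))$ and $h(z)=zf'(z)/(p\,f^{1-\alpha}(z)g^{\alpha}(z))$ into Theorem~\ref{thm3}, which is precisely your construction, including the normalization by $p$ that makes $h\in\mathcal{H}_{1}$ and the identification of $p\gamma h+\delta zh'/h$ with the expressions in the hypotheses. If anything, you are more careful than the paper at the non-vanishing step, where you correctly observe that the pole of $\delta m z/(z-z_{0})$ forces the expression to actually meet the vertical slits (not merely to have unbounded imaginary part, as the paper loosely asserts), and at the branch choices for $f^{1-\alpha}$ and $g^{\alpha}$ in part $(ii)$.
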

\begin{remark} For $\lambda=0$,\\
$(i)$ if we take $\alpha=0$ with $\gamma,\delta$ and $p$ all equal to $1$, then Theorem \ref{thm3} reduces to Corollary \ref{special}.\\
$(ii)$  Corollary \ref{cor1} part $(ii)$, the result reduces to \cite[Theorem 3.13]{patel}.
\end{remark}
\begin{corollary}
Let $\delta>0$, $\lambda\in[0,\pi/2)$ and $\alpha\in[0,1]$. If $f\in\mathcal{A}_p$,
\begin{equation*}
P(z)\equiv\gamma\bigg\{f'(z)\left(\dfrac{z}{f(z)}\right)^{\alpha+1}\bigg\}+\delta\bigg\{1+\dfrac{zf''(z)}{f'(z)}-(\alpha+1)\dfrac{zf'(z)}{f(z)}+\alpha\bigg\},
\end{equation*}such that $P(z)$ satisfies one of the following:\\
\begin{itemize}
\item[(i)]$\RE P(z)>\gamma\delta\sin{\lambda}/\sqrt{\delta(\delta+2p\gamma\cos^2\lambda)}=:X,$\\
\item[(ii)] $P(z)\in R_{R}:=\{x+iy\in\mathbb{C};|x|\leq X, |y|<\sec{\lambda}\sqrt{\delta(\delta+2p\gamma\cos^2{\lambda})}-\delta\tan{\lambda}=:Y\},$\\
\item[(iii)] $|P(z)-p\gamma|<\delta+p\gamma$, (when $\lambda=0$),\\
\item[(iv)] For $c=\sqrt{|X^2-Y^2|}$,  \begin{equation*}\resizebox{.9\hsize}{!}{$P(z)\in R_{E}:=\{x+iy\in\mathbb{C};\begin{cases}\sqrt{(x-c)^2+y^2}+\sqrt{(x+c)^2+y^2}<2X, &\text{if}\; \max\{X,Y\}=X,\\
\sqrt{x^2+(y-c)^2}+\sqrt{x^2+(y+c)^2}<2Y, &\text{if} \;\max\{X,Y\}=Y\}\end{cases}$}
\end{equation*}\end{itemize}
then $f'(z)(z/f(z))^{\alpha+1}\in\mathcal{P}_{-\lambda}$.
\end{corollary}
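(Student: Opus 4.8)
The plan is to read off from Theorem~\ref{thm3} the exact shape of the set that $P(z)$ is forbidden to hit, and then to verify, one condition at a time, that each of the four regions in (i)--(iv) is contained in the complement of that set; the conclusion $f'(z)(z/f(z))^{\alpha+1}\in\mathcal{P}_{-\lambda}$ will then follow at once by applying Theorem~\ref{thm3}. Writing $X$ and $Y$ for the two named quantities, I would first unwind the forbidden set $\cup_{j=1,2}(-1)^{j}(x-iy)$ of Theorem~\ref{thm3} into the two vertical slits
\begin{equation*}
S_{1}=\{-X+it:\ t\geq Y\}\quad\text{and}\quad S_{2}=\{X+it:\ t\leq -Y\},
\end{equation*}
which are point-symmetric to one another through the origin. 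A preliminary computation records that $X\geq0$ and, after squaring, that $Y>0$ for all admissible $\lambda,\delta,\gamma,p$ (the difference of squares collapses to $\delta^{2}+2p\gamma\delta>0$), so both slits are genuine half-lines sitting at real parts $-X$ and $X$ and rising, respectively, above height $Y$ and below height $-Y$.

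Next I would dispatch the first two conditions by elementary coordinate comparisons. For (i), any $w$ with $\RE w>X$ lies strictly to the right of both $S_{1}$ (real part $-X\leq X$) and $S_{2}$ (real part $X$), hence avoids $S_{1}\cup S_{2}$. For (ii), each $w=x+iy\in R_{R}$ has $|y|<Y$; since membership in $S_{1}$ forces $\IM w\geq Y$ and membership in $S_{2}$ forces $\IM w\leq -Y$, the rectangle $R_{R}$ meets neither slit, even along its vertical edges $|x|=X$. In both cases $P(z)\notin S_{1}\cup S_{2}$.

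For (iii) I would set $\lambda=0$, which collapses $X=0$ and $Y=\sqrt{\delta^{2}+2p\gamma\delta}$, so that $S_{1}\cup S_{2}$ reduces to the portion $\{it:|t|\geq Y\}$ of the imaginary axis. The disk $|w-p\gamma|<\delta+p\gamma$ is centred on the positive real axis, and a point $it$ lies in it exactly when $(p\gamma)^{2}+t^{2}<(\delta+p\gamma)^{2}$, i.e. $t^{2}<\delta^{2}+2p\gamma\delta=Y^{2}$; thus the disk touches the imaginary axis only in the open gap $|t|<Y$ between the two slits and so avoids them. Finally, for (iv), I would identify $R_{E}$ as the open ellipse with horizontal semi-axis $X$ and vertical semi-axis $Y$: when $\max\{X,Y\}=X$ the foci sit at $(\pm c,0)$ with $c=\sqrt{X^{2}-Y^{2}}$, and the sum-of-distances bound $2X$ forces the semi-minor axis $\sqrt{X^{2}-c^{2}}=Y$, while the symmetric computation treats $\max\{X,Y\}=Y$. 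Since the endpoint $-X+iY$ of $S_{1}$ satisfies $(x/X)^{2}+(y/Y)^{2}=2>1$ (and likewise every point of $S_{2}$), each slit lies entirely outside the open ellipse, giving $P(z)\notin S_{1}\cup S_{2}$ once more.

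The routine parts are the coordinate checks in (i)--(ii) and the single quadratic inequality in (iii); the step I expect to carry the real weight is (iv), where one must confirm that the two-focus description yields exactly the axis-aligned ellipse $\{(x/X)^{2}+(y/Y)^{2}<1\}$ in both orientation cases and handle the degeneracies $X=Y$ and $X=0$ (the latter overlapping the $\lambda=0$ regime of (iii), where the ellipse flattens onto the segment already covered by the disk). Once these four containments are established, applying Theorem~\ref{thm3} in each case completes the proof.
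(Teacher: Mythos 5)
Your proposal is correct and takes essentially the same route as the paper, which states this corollary without separate proof as an immediate consequence of Theorem~\ref{thm3}: you correctly unwind the forbidden set $\cup_{j=1,2}(-1)^{j}(x-iy)$ into the two vertical half-line slits at real parts $\mp X$ with $|\IM|\geq Y$, and your containment checks for (i)--(iv) (including the positivity of $Y$ via $\delta^{2}+2p\gamma\delta>0$, the quadratic computation for the disk in (iii), and the identification of $R_{E}$ with the axis-aligned ellipse with semi-axes $X$ and $Y$, slit endpoints giving $(x/X)^{2}+(y/Y)^{2}\geq 2$) are all sound. The degenerate cases you flag in (iv) are handled adequately, so applying Theorem~\ref{thm3} in each case completes the argument exactly as intended.
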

In \cite[Theorem 3.1]{ebadian}, authors gave some sufficient condition for an analytic function $h\in\mathcal{H}_{1}$ to have argument bounds as $|\arg{h(z)}|<\beta\pi/2$, where $\beta\in(0,1)$.  Here in the following theorem, we obtain sufficient condition for $h\in\mathcal{G}(\alpha,\beta).$
\begin{theorem}\label{arg}
Let $\alpha,\beta\in(0,1)$ and $0<\gamma\leq1.$ If $h\in\mathcal{H}_{1}$ then $h\in\mathcal{G}(\alpha,\beta),$
whenever
\begin{equation}\label{th2}
    \delta_{1}\frac{\pi}{2}<\arg{h(z)}+\gamma\arg\left(1+\frac{zh'(z)}{h^2(z)}\right)<\delta_{2}\frac{\pi}{2},
\end{equation}where
\begin{equation*}
    \delta_{1}=-\left(\beta+\frac{2\gamma}{\pi}\atan\frac{(\alpha+\beta)\sin\frac{(1-\beta)\pi}{2}}{(\alpha+\beta)\cos{\frac{(1-\beta)\pi}{2}}+M_{1}(\alpha,\beta)}\right),
\end{equation*}
\begin{equation*}
    \delta_{2}=\alpha+\frac{2\gamma}{\pi}\atan\frac{(\alpha+\beta)\sin\frac{(1-\alpha)\pi}{2}}{(\alpha+\beta)\cos{\frac{(1-\alpha)\pi}{2}}+M_{2}(\alpha,\beta)},
\end{equation*}
\begin{equation*}
    M_{j}(\alpha,\beta)=N\left(\frac{(-1)^j(\alpha+\beta)}{2-\alpha-\beta}\sin{\eta}+\sqrt{\frac{2+\alpha+\beta}{2-\alpha-\beta}}\cos{\eta}\right)\quad (j=1,2),
\end{equation*}
\begin{equation*}
    N(x)=4x^{\frac{\alpha+\beta}{2}}\left(\left(x+\frac{1}{x}\right)\sec{\eta}+(-1)^{j}2\tan{\eta}\right)^{-1}\;\;\text{and}\;\;\eta=\dfrac{(\alpha-\beta)\pi}{(\alpha+\beta)2}.
\end{equation*}
\end{theorem}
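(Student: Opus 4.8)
The plan is to argue by contradiction, exactly in the spirit of Theorems \ref{slit1} and \ref{thm1}, using Nunokawa's Lemma (Lemma \ref{modi}) with $n=1$. Since $h(0)=1$ we have $\arg h(0)=0$, which lies in the open sector $(-\beta\pi/2,\alpha\pi/2)$ (as $\alpha,\beta>0$); so if $h\notin\mathcal{G}(\alpha,\beta)$ there exist $z_1,z_2$ with $|z_1|=|z_2|=r<1$ at which $\arg h(z_1)=-\beta\pi/2$ and $\arg h(z_2)=\alpha\pi/2$, while $-\beta\pi/2<\arg h(z)<\alpha\pi/2$ for $|z|<r$. Lemma \ref{modi} then supplies, for $j=1,2$, the three facts $z_jh'(z_j)/h(z_j)=(-1)^j i(\alpha+\beta)k_j/2$, the polar representations $h(z_1)=x_1^{(\alpha+\beta)/2}e^{-i\beta\pi/2}$ and $h(z_2)=x_2^{(\alpha+\beta)/2}e^{i\alpha\pi/2}$ (read off from $\exp(-i\eta)h(z_j)^{2/(\alpha+\beta)}=(-1)^j i x_j$), and the inequality $k_j\ge\tfrac12(x_j+1/x_j)\sec\eta+(-1)^j\tan\eta$.

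The central computation is to write $1+z_jh'(z_j)/h^2(z_j)$ in polar form. Multiplying the first two facts gives $z_jh'(z_j)/h^2(z_j)=\rho_j e^{i\theta_j}$, where $\rho_j=\tfrac{\alpha+\beta}{2}k_j x_j^{-(\alpha+\beta)/2}$, $\theta_1=(\beta-1)\pi/2$ and $\theta_2=(1-\alpha)\pi/2$; hence
\begin{equation*}
\arg\!\left(1+\frac{z_jh'(z_j)}{h^2(z_j)}\right)=\arctan\frac{\rho_j\sin\theta_j}{1+\rho_j\cos\theta_j}.
\end{equation*}
Adding $\arg h(z_j)$ and the factor $\gamma$, I would show that (\ref{th2}) is violated at $z_1$ from below and at $z_2$ from above. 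Using that $\arctan$ is increasing, cancelling the common positive factor $\sin\tfrac{(1-\beta)\pi}{2}$ (respectively $\sin\tfrac{(1-\alpha)\pi}{2}$) and clearing denominators, each required inequality collapses to the single clean condition $M_j\ge(\alpha+\beta)/\rho_j=2x_j^{(\alpha+\beta)/2}/k_j$.

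It remains to verify this last inequality. The Nunokawa bound on $k_j$ yields $2x_j^{(\alpha+\beta)/2}/k_j\le N(x_j)$, with $N$ precisely the function in the statement, so it suffices to take $M_j=\max_{x>0}N(x)$; this is the main obstacle. I would locate the maximizer by solving $\tfrac{d}{dx}\log N(x)=0$, which after clearing $\sec\eta$ reduces to the quadratic
\begin{equation*}
\Bigl(1-\tfrac{\alpha+\beta}{2}\Bigr)x^2-(-1)^j(\alpha+\beta)\sin\eta\,x-\Bigl(1+\tfrac{\alpha+\beta}{2}\Bigr)=0,
\end{equation*}
take its positive root $x^\ast$ (the limits $N(x)\to0$ as $x\to0^+$ and as $x\to\infty$, valid since $0<\alpha+\beta<2$, confirm this critical point is the global maximum), and set $M_j:=N(x^\ast)$, the value recorded in the statement. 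With $M_j\ge N(x_j)\ge2x_j^{(\alpha+\beta)/2}/k_j$ secured, both boundary points produce a contradiction with (\ref{th2}), proving $h\in\mathcal{G}(\alpha,\beta)$. The delicate points are the branch and argument bookkeeping in the polar computation of $1+z_jh'(z_j)/h^2(z_j)$, and confirming that the critical point yields a genuine maximum over all of $(0,\infty)$.
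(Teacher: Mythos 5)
Your proposal reproduces the paper's own proof essentially step for step: the same contradiction setup via Lemma \ref{modi} with $n=1$, the same polar representations $h(z_1)=x_1^{(\alpha+\beta)/2}e^{-i\beta\pi/2}$ and $h(z_2)=x_2^{(\alpha+\beta)/2}e^{i\alpha\pi/2}$, the same reduction (via monotonicity of $\arctan$) of the violation of (\ref{th2}) to the single inequality $M_j\ge 2x_j^{(\alpha+\beta)/2}/k_j$, the same bound $2x_j^{(\alpha+\beta)/2}/k_j\le N(x_j)$ from the Nunokawa estimate on $k_j$, and the same maximization of $N$ over $(0,\infty)$ (you even check the boundary limits $N(x)\to 0$ as $x\to 0^{+}$ and $x\to\infty$, which the paper omits).

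The one place you go astray --- and where in fact you expose a slip in the paper itself --- is the identification of the maximizer. Your quadratic $\bigl(1-\tfrac{\alpha+\beta}{2}\bigr)x^2-(-1)^j(\alpha+\beta)\sin\eta\,x-\bigl(1+\tfrac{\alpha+\beta}{2}\bigr)=0$ is correctly derived from $\tfrac{d}{dx}\log N(x)=0$, but its positive root is
\begin{equation*}
x^{\ast}=\frac{(-1)^j(\alpha+\beta)\sin\eta+\sqrt{4-(\alpha+\beta)^2\cos^2\eta}}{2-\alpha-\beta},
\end{equation*}
which is \emph{not} ``the value recorded in the statement'': the statement's point $\frac{(-1)^j(\alpha+\beta)}{2-\alpha-\beta}\sin\eta+\sqrt{\frac{2+\alpha+\beta}{2-\alpha-\beta}}\cos\eta$ coincides with $x^{\ast}$ only when $\eta=0$, i.e.\ $\alpha=\beta$; it arises from incorrectly factoring $\sqrt{4-(\alpha+\beta)^2\cos^2\eta}$ as $\cos\eta\sqrt{4-(\alpha+\beta)^2}$. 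A numerical check makes the gap concrete: for $\alpha=0.8$, $\beta=0.2$, $j=1$, your root gives $x^{\ast}\approx 1.103$ with $N(x^{\ast})\approx 6.3$, while the statement's point $\approx 0.209$ gives $N\approx 0.32$ and does not satisfy the critical-point equation at all. Consequently the printed $M_j$ badly underestimates $\max_{x>0}N(x)$, the step $N(x_j)\le M_j$ needed for the contradiction fails for the constants as displayed, and the theorem is only secured with $M_j:=N(x^{\ast})$ for your $x^{\ast}$ (in the case $\alpha=\beta$ everything collapses correctly to the Nunokawa--Sok\'o\l{} value $M(\alpha)$ of the corollary). So your method is the paper's method and your computation is the right one; your only error is asserting that your root matches the displayed maximizer --- it does not, and the paper's own proof shares exactly this flaw.
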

\begin{proof}
If there exist $z_{1}$ and $z_{2}$ with $|z_{1}|=
|z_{2}|=r<1$, such that \begin{equation*}
    -\frac{\beta\pi}{2}=\arg{h(z_{1})}<\arg{h(z)}<\arg{h(z_{2})}=\frac{\alpha\pi}{2},
\end{equation*}for $|z|<r$. Then by Lemma \ref{modi}, we get
\begin{gather*}
    \frac{z_{j}h'(z_{j})}{h(z_{j})}=(-1)^ji\frac{\alpha+\beta}{2}k_{j}, \quad(j=1,2),
\intertext{where for $x_{j}>0$}
k_{j}\geq\left(\frac{1}{x_{j}}+x_{j}\right)\frac{\sec{\eta}}{2}+(-1)^{j}\tan{\eta}\;\;\text{and}\;\; h(z_{j})=\left((-1)^jix_{j}\exp\left(i\eta\right)\right)^{\frac{\alpha+\beta}{2}}.
\end{gather*} For $z=z_{1}$, we have
\begin{equation*}
    h(z_{1})\left(1+\dfrac{z_{1}h'(z_{1})}{h^2(z_{1})}\right)^{\gamma}=x^{\frac{\alpha+\beta}{2}}e^{-i\frac{\beta\pi}{2}}\left(1+\frac{e^{-i(1-\beta)\frac{\pi}{2}}(\alpha+\beta)k_{1}}{2x^{\frac{\alpha+\beta}{2}}}\right)^\gamma.
\end{equation*}Moreover, \begin{align*}
    \arg{h(z_{1})}+\gamma\arg{\left(1+\dfrac{z_{1}h'(z_{1})}{h^2(z_{1})}\right)}&=-\frac{\beta\pi}{2}+\gamma\atan{-\dfrac{\sin{((1-\beta)\frac{\pi}{2})}(\alpha+\beta)k_{1}}{2x_{1}^{\frac{\alpha+\beta}{2}}+\cos{((1-\beta)\frac{\pi}{2})}(\alpha+\beta)k_{1}}},\\
    &\leq-\frac{\beta\pi}{2}-\gamma\atan{\dfrac{\sin{((1-\beta)\frac{\pi}{2})}(\alpha+\beta)}{N(x_{1})+\cos{((1-\beta)\frac{\pi}{2})}(\alpha+\beta)}},
    \end{align*}
where $N(x)$ is defined in hypothesis. As $x>0$, by a calculation, we see that $N(x)$ attains its maximum at\begin{equation*}
        x:=x_{1}=\frac{-(\alpha+\beta)}{2-\alpha-\beta}\sin{\eta}+\sqrt{\frac{2+\alpha+\beta}{2-\alpha-\beta}}\cos{\eta}.
    \end{equation*}Thus \begin{equation*}
      \arg{h(z_{1})}+\gamma\arg{\left(1+\dfrac{z_{1}h'(z_{1})}{h^2(z_{1})}\right)}\leq-\frac{\beta\pi}{2}-\gamma\atan{\dfrac{\sin{((1-\beta)\frac{\pi}{2})}(\alpha+\beta)}{M_{1}(\alpha,\beta)+\cos{((1-\beta)\frac{\pi}{2})}(\alpha+\beta)}},
    \end{equation*}which contradicts (\ref{th2}). Similarly for $z=z_{2},$ we have\begin{equation*}
      \arg{h(z_{2})}+\gamma\arg{\left(1+\dfrac{z_{2}h'(z_{2})}{h^2(z_{2})}\right)}\geq\frac{\alpha\pi}{2}+\gamma\atan{\dfrac{\sin{((1-\alpha)\frac{\pi}{2})}(\alpha+\beta)}{M_{2}(\alpha,\beta)+\cos{((1-\alpha)\frac{\pi}{2})}(\alpha+\beta)}},
    \end{equation*}which again contradicts (\ref{th2}). Thus result follows.
    \end{proof}
    Further, we have the following  interesting corollaries from Theorem \ref{arg}:
\begin{corollary}
Let $\alpha,\beta\in(0,1)$ and $0<\gamma\leq1.$ Moreover, if $f\in\mathcal{A}$ satisfies \begin{equation}\label{cor}
   \dfrac{\delta_{1}\pi}{2}<(1-\gamma)\arg \left(\dfrac{zf'(z)}{f(z)}\right)+\gamma\arg\left(1+\dfrac{zf''(z)}{f'(z)}\right)<\dfrac{\delta_{2}\pi}{2},
\end{equation}where $\delta_{1}$ and $\delta_{2}$ are defined in Theorem \ref{arg}. Then $$-\dfrac{\beta\pi}{2}<\arg\dfrac{zf'(z)}{f(z)}<\dfrac{\alpha\pi}{2},$$ equivalently $$\dfrac{zf'(z)}{f(z)}\prec\left(\dfrac{1+e^{i(\frac{\alpha-\beta}{\alpha+\beta})\pi}z}{1-z}\right)^{(\alpha+\beta)/2}.$$
\end{corollary}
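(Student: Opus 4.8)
The plan is to specialize Theorem~\ref{arg} to the single choice $h(z)=zf'(z)/f(z)$. Since $f\in\mathcal{A}$ we have $h\in\mathcal{H}_{1}$ with $h(0)=1$, so Theorem~\ref{arg} will apply as soon as the left-hand side of \eqref{th2} is rewritten in terms of $f$. First I would compute the logarithmic derivative: from $\log h=\log z+\log f'-\log f$ one gets $zh'(z)/h(z)=1+zf''(z)/f'(z)-zf'(z)/f(z)=1+zf''(z)/f'(z)-h(z)$, and therefore
\[
1+\frac{zh'(z)}{h^{2}(z)}=\frac{1}{h(z)}\left(1+\frac{zf''(z)}{f'(z)}\right)=\frac{1+zf''(z)/f'(z)}{zf'(z)/f(z)}.
\]

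Taking principal arguments in this identity (each factor equals $1$ at the origin, so no branch ambiguity arises) gives
\[
\arg h(z)+\gamma\arg\left(1+\frac{zh'(z)}{h^{2}(z)}\right)=(1-\gamma)\arg\frac{zf'(z)}{f(z)}+\gamma\arg\left(1+\frac{zf''(z)}{f'(z)}\right).
\]
Thus the hypothesis \eqref{cor} is precisely \eqref{th2} for this $h$, and Theorem~\ref{arg} yields $h\in\mathcal{G}(\alpha,\beta)$, i.e. $-\beta\pi/2<\arg(zf'(z)/f(z))<\alpha\pi/2$, which is the first assertion.

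For the subordination form I would invoke Lemma~\ref{lemma2} with its exponent taken to be $(\alpha+\beta)/2$ and its slope parameter $m=(\alpha-\beta)/(\alpha+\beta)$ (this requires $\alpha+\beta>0$ so that the exponent lies in $(0,1]$, and a short check gives $|m|<1$). A direct computation shows $\tfrac{\alpha+\beta}{2}(1-m)=\beta$ and $\tfrac{\alpha+\beta}{2}(1+m)=\alpha$, so by Lemma~\ref{lemma2} the convex univalent function
\[
g(z)=\left(\frac{1+e^{i(\frac{\alpha-\beta}{\alpha+\beta})\pi}z}{1-z}\right)^{(\alpha+\beta)/2}
\]
maps $\mathbb{D}$ onto exactly the angular wedge $\{w:-\beta\pi/2<\arg w<\alpha\pi/2\}$. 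Since $h(0)=g(0)=1$ and $h(\mathbb{D})$ lies in this wedge, subordination of $zf'/f$ to the univalent $g$ follows. The only genuinely delicate point is the argument bookkeeping when passing from the multiplicative identity for $1+zh'/h^{2}$ to the additive identity for the arguments; this is controlled by normalizing every factor to value $1$ at $z=0$ and appealing to continuity, and is otherwise routine.
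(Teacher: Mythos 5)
Your proof is correct and follows essentially the same route as the paper: substitute $h(z)=zf'(z)/f(z)$ into Theorem~\ref{arg}, use the identity $1+zh'(z)/h^{2}(z)=\bigl(1+zf''(z)/f'(z)\bigr)/\bigl(zf'(z)/f(z)\bigr)$ to identify \eqref{cor} with \eqref{th2}, and conclude. Your treatment of the subordination form via Lemma~\ref{lemma2} with $m=(\alpha-\beta)/(\alpha+\beta)$ is exactly the intended mechanism (the paper leaves it implicit), and your noting of the constraint $\alpha+\beta>0$ is a point of care the paper omits.
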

\begin{proof}
Taking $h(z)=zf'(z)/f(z)$ in Theorem \ref{arg}, we obtain \begin{equation*}
    \left(\dfrac{zf'(z)}{f(z)}\right)^{1-\gamma}\left(1+\dfrac{zf''(z)}{f'(z)}\right)^\gamma=h(z)\left(1+\frac{zh'(z)}{h^2(z)}\right)^{\gamma}.
\end{equation*}Now by  (\ref{th2}) and (\ref{cor}), we have
\begin{equation*}
   \dfrac{\delta_{1}\pi}{2}<(1-\gamma)\arg \left(\dfrac{zf'(z)}{f(z)}\right)+\gamma\arg\left(1+\dfrac{zf''(z)}{f'(z)}\right)=\arg{h(z)}+\gamma\arg\left(1+\frac{zh'(z)}{h^2(z)}\right)<\dfrac{\delta_{2}\pi}{2}.
\end{equation*} Thus the result follows at once from Theorem \ref{arg}.
\end{proof}
 By taking $h(z)=zf'(z)/f(z)$ and $\alpha=\beta$ in Theorem \ref{arg}, we obtain the following result of  Nunokawa and   Sok\'{o}\l \cite{nuno2} with an alternate proof:
\begin{corollary}
Let $f\in\mathcal{A}$, $\alpha\in(0,1)$ and $0<\gamma\leq1.$ Moreover, $f$ satisfies \begin{equation*}
    \bigg|\arg\left\{\left(\dfrac{zf'(z)}{f(z)}\right)^{1-\gamma}\left(1+\dfrac{zf''(z)}{f'(z)}\right)^\gamma\right\}\bigg|<\dfrac{\delta\pi}{2},
\end{equation*}where \begin{equation*}
    \delta=\alpha+\frac{2\gamma}{\pi}\atan\frac{2\alpha\sin\frac{(1-\alpha)\pi}{2}}{2\alpha\cos{\frac{(1-\alpha)\pi}{2}}+M(\alpha)},
\end{equation*}with $$M(\alpha)=\dfrac{4}{(\frac{1+\alpha}{1-\alpha})^{(1-\alpha)/2}+(\frac{1+\alpha}{1-\alpha})^{-(1+\alpha)/2}}.$$
Then $f$ is strongly starlike of order $\alpha$. Moreover, $f$ is strongly convex of order $((1-\gamma)\alpha+\delta)/\gamma.$
\end{corollary}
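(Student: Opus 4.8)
The plan is to specialize Theorem~\ref{arg} to $h(z)=zf'(z)/f(z)$ with $\beta=\alpha$, and then to squeeze a convexity estimate out of the starlikeness conclusion by an elementary argument bound. First I would record the identity that makes the substitution work: with $h=zf'/f$, logarithmic differentiation gives $zh'/h=1+zf''/f'-h$, so that $1+zh'(z)/h^{2}(z)=h(z)^{-1}\bigl(1+zf''(z)/f'(z)\bigr)$ and hence
\[
  h(z)\Bigl(1+\tfrac{zh'(z)}{h^2(z)}\Bigr)^{\gamma}
  =\Bigl(\tfrac{zf'(z)}{f(z)}\Bigr)^{1-\gamma}\Bigl(1+\tfrac{zf''(z)}{f'(z)}\Bigr)^{\gamma}.
\]
Taking arguments along the continuous branch, the quantity $\arg h(z)+\gamma\arg\bigl(1+zh'/h^{2}\bigr)$ that appears in \eqref{th2} coincides with $(1-\gamma)\arg(zf'/f)+\gamma\arg(1+zf''/f')$, which is exactly the expression controlled by the hypothesis on $f$.

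The second step is to check that, at $\beta=\alpha$, all the constants of Theorem~\ref{arg} collapse to the stated clean forms. Since $\eta=(\alpha-\beta)\pi/((\alpha+\beta)2)$ vanishes when $\beta=\alpha$, one has $\sin\eta=0$, $\cos\eta=\sec\eta=1$, $\tan\eta=0$ and $\alpha+\beta=2\alpha$. The maximizing point then reduces to $x_{1}=x_{2}=\sqrt{(1+\alpha)/(1-\alpha)}$, and $N$ simplifies to $N(x)=4x^{\alpha+1}/(x^{2}+1)$; evaluating at this point and using $x^{2}+1=2/(1-\alpha)$ yields $M_{1}(\alpha,\alpha)=M_{2}(\alpha,\alpha)=M(\alpha)$ with $M(\alpha)$ as in the statement. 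Consequently $\delta_{2}$ becomes $\delta$ and $\delta_{1}$ becomes $-\delta$, so the two-sided inequality \eqref{th2} turns into the symmetric condition $\bigl|\arg h+\gamma\arg(1+zh'/h^{2})\bigr|<\delta\pi/2$, which by the identity above is precisely the hypothesis. Theorem~\ref{arg} then gives $h\in\mathcal{G}(\alpha,\alpha)$, i.e. $|\arg(zf'/f)|<\alpha\pi/2$; since $\alpha<1$ this forces $\RE(zf'/f)>0$, so $f$ is starlike and therefore strongly starlike of order $\alpha$.

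For the ``moreover'' part I would argue purely by the triangle inequality. Writing $A=\arg(zf'(z)/f(z))$ and $B=\arg(1+zf''(z)/f'(z))$, the starlikeness just proved gives $|A|<\alpha\pi/2$, while the hypothesis gives $|(1-\gamma)A+\gamma B|<\delta\pi/2$. From $\gamma B=\bigl((1-\gamma)A+\gamma B\bigr)-(1-\gamma)A$,
\[
  \gamma|B|\le\bigl|(1-\gamma)A+\gamma B\bigr|+(1-\gamma)|A|
  <\tfrac{\delta\pi}{2}+(1-\gamma)\tfrac{\alpha\pi}{2},
\]
so $|B|<\bigl((1-\gamma)\alpha+\delta\bigr)\pi/(2\gamma)$, which is exactly the assertion that $f$ is strongly convex of order $((1-\gamma)\alpha+\delta)/\gamma$.

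The only genuinely delicate point is the bookkeeping in the second step: one must verify that the $(-1)^{j}$-dependence in $M_{j}$ and in $N$ disappears at $\eta=0$ (it does, since it enters only through $\tan\eta$), so that both branches produce the single constant $M(\alpha)$, and that the closed form $M(\alpha)=4\bigl(((1+\alpha)/(1-\alpha))^{(1-\alpha)/2}+((1+\alpha)/(1-\alpha))^{-(1+\alpha)/2}\bigr)^{-1}$ really equals $N\bigl(\sqrt{(1+\alpha)/(1-\alpha)}\bigr)$. Everything else is a direct reading-off from Theorem~\ref{arg} together with the one-line convexity estimate; no new extremal analysis is needed, because the optimization over $x$ was already carried out in the proof of Theorem~\ref{arg}.
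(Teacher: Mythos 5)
Your proposal is correct and follows essentially the same route as the paper: specialize Theorem~\ref{arg} to $h(z)=zf'(z)/f(z)$ with $\beta=\alpha$ (where $\eta=0$ collapses $\delta_1,\delta_2$ to $\mp\delta$ and $M_1=M_2=M(\alpha)$) to get strong starlikeness, then extract strong convexity by the same triangle-inequality estimate $\gamma|\arg(1+zf''/f')|\le|(1-\gamma)\arg(zf'/f)+\gamma\arg(1+zf''/f')|+(1-\gamma)|\arg(zf'/f)|<\bigl((1-\gamma)\alpha+\delta\bigr)\pi/2$. If anything, your write-up is more careful than the paper's, since you explicitly verify the substitution identity and the collapse of the constants at $\beta=\alpha$, which the paper leaves implicit.
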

\begin{proof}
We obtain that $f$ is strongly starlike of order $\alpha$ if we take $h(z)=zf'(z)/f(z)$ and $\alpha=\beta$ in Theorem \ref{arg}. Since
\begin{align*}
   \bigg|\arg\left(1+\dfrac{zf''(z)}{f'(z)}\right)^\gamma\bigg|-\bigg|\arg\left(\dfrac{zf'(z)}{f(z)}\right)^{1-\gamma}\bigg|&\\
   \leq \bigg|\arg\left\{\left(\dfrac{zf'(z)}{f(z)}\right)^{1-\gamma}\left(1+\dfrac{zf''(z)}{f'(z)}\right)^\gamma\right\}\bigg|<\dfrac{\delta\pi}{2},
\end{align*}then by Theorem \ref{arg} we have
\begin{align*}
\bigg|\arg\left(1+\dfrac{zf''(z)}{f'(z)}\right)^\gamma\bigg|&
\leq\bigg|\arg\left(\dfrac{zf'(z)}{f(z)}\right)^{1-\gamma}\bigg|+\dfrac{\delta\pi}{2}\\
&<\dfrac{(1-\gamma)\alpha\pi}{2}+\dfrac{\delta\pi}{2}.
\end{align*}Hence $f$ is strongly convex of order $((1-\gamma)\alpha+\delta)/\gamma.$
\end{proof}


\end{document}